\newtheorem{lemma}{Lemma}
\newtheorem{theorem}{Theorem}
\newtheorem{assumption}{Assumption}
\newtheorem{proposition}{Proposition}
\newtheorem{notation}{\bf{Notation}}
\newcommand{\bld}{\boldsymbol{\lambda}}
\newcommand{\xbf}{\mathbf{x}}
\newcommand{\Xcal}{\mathcal{X}}
\newcommand{\Rset}{\mathbb{R}}
\newcommand{\Psf}{\sf{P}}
\newcommand{\Lcal}{\mathcal{L}}
\title{\LARGE \bf
Distributed Lagrangian Methods for Network Resource Allocation 
}
\author{Thinh T. Doan$^{1}$ and Carolyn L. Beck$^{2}$
\thanks{$^{1}$Thinh T. Doan is with the Deparment of Electrical and Computer Engineering, University of Illinois,  Urbana, IL, USA
        {\tt\small ttdoan2@illinois.edu}}%
\thanks{$^{2}$Carolyn Beck is with the Department of Industrial and Enterprise Systems Engineering, University of Illinois,  Urbana, IL, USA
        {\tt\small beck3@illinois.edu}}}%
\begin{document}
\maketitle

\begin{abstract}
Motivated by a variety of applications in control engineering and information sciences, we study network resource allocation problems where the goal is to optimally allocate a fixed amount of resource over a network of nodes. In these problems, due to the large scale of the network with complicated inter-connections between nodes, any solution must be implemented parallel between nodes and based only on local data resulting in the need of distributed algorithms. In this paper, we propose a novel distributed Lagrangian methods, which only requires local computation and communication. Our focus is to understand the performance of this algorithm on the underlying network topology. Specifically, we obtain an upper bound on the rate of convergence of the algorithm as a function of the size and the topology of the underlying network. Finally, to illustrate the effectiveness of the proposed method we apply it to solve the important economic dispatch problems in power systems on the benchmark IEEE 14 and 118 bus systems.      
\end{abstract}

\section{Introduction}\label{sec:Introduction}
We study network resource allocation problems, where the goal is to optimally allocate a fixed portion of resources over a network of nodes. Each node suffers a cost as a function of the portion of resources allocated to it. The goal of this problem is to seek an optimal allocation such that the total cost incurred over the network is minimized, while satisfying each node's local constraints. This problem is sometimes described in terms of utility functions, with the goal being to maximize the total utility. In this problem, due to the large scale and complex interconnection structure of the network, central coordination is undesirable or even impossible, resulting in the necessity for	 distributed algorithms. Our focus, therefore, is to study distributed algorithms which can be implemented in parallel and require only local data.  

Resource allocation is a fundamental and important problem that arises in a variety of application domains within engineering. One standard example is the problem of congestion control where the global objective is to route and schedule information in an internet network such that a fair resource allocation between users is achieved \cite{Srikant2004}. Another example is coverage control in wireless sensor networks, where the goal is to optimally allocate a large number of sensors in an unknown environment such that the coverage area is maximized \cite{Cortes2004, SharmaSB2012}. A simplified version of the important economic dispatch problem in power systems can also be viewed as a resource allocation problem, wherein geographically distributed generators of electricity must coordinate to meet fixed demand while maintaining the stability of the systems \cite{StevenLow2014,Dorfler2015}.

Given the large application domains of network resource allocation problems, distributed algorithms for such problems have received a surge of interest in recent years. The first algorithm which could be implemented in a distributed way was the ``center-free'' protocol introduced in \cite{HoServiSuri80} for a relaxation of this problem, in which the authors consider the case where local constraints are excluded. The term ``center-free'' was originally meant to refer to the absence of any central coordination. In this relaxed problem, the necessary and suffcient optimality conditions are the consensus of the nodes' incremental costs and the feasibility constraint on the total amount of resource available \cite{XiaoBoyd06}. The work in \cite{HoServiSuri80} has been followed by work in \cite{XiaoBoyd06, LakshmananDeFarias08,  Necoara13, Doan17}, with the main focus on analyzing the performance of the algorithm and its variants for the relaxed problem. 

Although the results of the relaxed problem are well-developed, they are impractical since the local constraints of the nodes are critical in real applications. For example, in economic dispatch problems these local constraints, which represent the limited capacity of generators, are inevitable. Recently, there has been a number of studies on resource allocation problems which utilize the necessary and sufficient conditions of the relaxed problem. In particular, the authors in \cite{DominguezGarcia2012, Yang2013, Binetti2014, Kar2012, Xing2015} focus on studying economic dispatch problems where the objective functions are quadratic. Their methods are mainly based on utilizing distributed consensus algorithms to achieve an agreement on the incremental costs between generators while using a proper projection for the generators' local constraints. The authors in  \cite{CortesChekuri2015, LakshmananDeFarias08} relax the assumption on quadratic cost functions to allow for any convex cost functions with Lipschitz continuous gradients. These methods are based on considering a relaxed problem that uses appropriate penalty functions for the nodes' local constraints. While the authors in \cite{LakshmananDeFarias08} only show a convergence to a near-optimal solution, the authors in \cite{CortesChekuri2015} show that the solutions of the relaxed problem coincide with resource allocation problems with an appropriate choice of the penalty level.

In this paper, we propose a fully distributed method for resource allocation problems with general convex objective functions. We focus on analyzing the performance of the method to understand how fast the optimal value may be obtained. In particular, we provide an upper bound for the convergence rate of the method as a function of the size and the topology of underlying networks. This fundamental question, which is necessary to analyze distributed systems, has not been considered explicitly in the literature. Moreover, unlike the work previously cited  where the authors consider either a relaxation of the problem or specific quadratic objective functions, our minimal assumption on the convexity of the objective functions is general enough to cover a larger class of problems.   

\textbf{Main Contributions} The main contributions of the paper are the following. We first propose a novel distributed Lagrangian method for resource allocation problems on undirected and connected networks. Second, we demonstrate that under fairly standard assumptions on the convexity of objective functions and network connectivity, the distributed Lagrangian method achieves a convergence rate $\mathcal{O}(n\ln(k)/(1-\sigma_2)\sqrt{k})$ to the optimal value, where $\sigma_2$ is a parameter representing the spectral properties of network connectivity of the nodes, $n$ is the number of nodes, and $k$ is the number of iterations. Unlike the work in \cite{CortesChekuri2015} and \cite{ LakshmananDeFarias08}, we assume neither any relaxation nor Lipschitz continuity of the gradient of objective functions in our problem, meaning that our method is general enough to cover a larger class of problems. As one illustration of the effectiveness of the proposed method for control and optimization applications, we apply it to solve the important economic dispatch problem in power systems, and specifically consider the benchmark IEEE-14 and IEEE-118 bus test systems. Simulations show that our method outperforms the method proposed in \cite{Kar2012}.

The remainder of this paper is organized as follows. We give a formal statement of the problem in Section \ref{sec:ProbForm}. Our proposed method is described in Section \ref{sec:Algorithm} and its convergence analysis is given in Section \ref{sec:Analysis}. In Section \ref{sec:Simulation}, we discuss the results of applying our  method to specific economic dispatch problems via simulations. We conclude the paper with some potential future directions in Section \ref{sec:Conclusion}.

\begin{notation}
We use boldface to distinguish between vectors $\mathbf{x}$ in $\mathbb{R}^n$ and scalars $x$ in $\mathbb{R}$. Given a vector $\mathbf{x}$, we write $\mathbf{x}=(x_1,x_2,\ldots,x_n)$ and denote by $\|\mathbf{x}\|_2$ its Euclidean norm. Finally, let $\mathbf{1}\in\mathbb{R}^n$ be the vector whose entries are $1$.
\end{notation}

\section{Problem Formulation}\label{sec:ProbForm}
In this paper we consider resource allocation problems over a network of $n$ nodes. The goal of this problem is to seek an optimal allocation for a prespecified portion or quantity of a given resource such that the total cost incurred over the network is minimized while satisfying the nodes' local constraints. In general, this problem can be formulated as the following optimization problem:
\begin{align}\label{prob:OPI}
\text{{\sf P} \ :}\; \left\{ \begin{array}{ll}
\min_{\mathbf{x}\in\mathbb{R}^n} \sum_{i=1}^n f_i(x_i)\vspace{0.2cm}\\
\text{s.t. }\;\sum_{i=1}^{n}x_i =  b,\\
\quad\quad x_i\in\mathcal{X}_i,\quad\forall i = 1,\ldots,n.
\end{array}\right.
\end{align}
where $f_i:\mathbb{R}\rightarrow\mathbb{R}$ is a cost function known only by node $i$ and the constant $b$ is the total portion of resources shared by the nodes. Moreover, the sets $\mathcal{X}_i$ are assumed to be convex and compact.  We note that the formulation of {\sf P} is general enough to cover the models studied in \cite{Doan17, CortesChekuri2015, LakshmananDeFarias08, Kar2012, DominguezGarcia2012}.

In the sequel, will use $\mathcal{X}$ to denote the Cartesian product of $\mathcal{X}_i$, i.e., $~\mathcal{X} = \mathcal{X}_1\times\mathcal{X}_2\times\ldots\times\mathcal{X}_n$. The feasible set $\mathcal{S}$ of {\sf P} is given as  $\mathcal{S}=\{\mathbf{x}\in\mathcal{X} | \sum_{i=1}^n x_i = b\}$ and it is assumed to be nonempty. For short, we denote $f:\mathbb{R}^n\rightarrow\mathbb{R}$ as the sum of the functions $f_i$, i.e.,
\begin{align}
f(\mathbf{x}) = \sum_{i=1}^n f_i(x_i).\label{prob:objFunction}
\end{align}
We will make the following assumptions on the objective functions $f_i$ throughout the paper:
\begin{assumption}\label{assump:ConvexAssump}
For each $i=1,\ldots,n$ the function $f_i:\mathbb{R}\rightarrow\mathbb{R}$ is proper, closed, and convex.
\end{assumption}

\begin{assumption}[Slater's condition \cite{Bertsekas2003}]\label{assump:Slater}
There exists a point $\tilde{\xbf}$ that belongs to the relative interior of $\Xcal$ and satisfies $\sum_{i=1}^n \tilde{x}_i = b$.
\end{assumption}

We note that since the constraint set in {\sf P} is compact and the objective function $f$ is continuous, there exists a vector $\mathbf{x}^*=(x_1^*,x_2^*,\ldots,x_n^*)\in\mathcal{S}$ which achieves the minimum of this problem. However, this solution may not be unique. We denote the set of solutions of {\sf P} as $\mathcal{S}^*$. Moreover, under the Slater's condtion strong duality holds and the set of dual optimal solutions is nonempty.

Our focus is on designing a distributed method to solve {\sf P} in a network, which means that each node is only allowed to interact with nodes, referred to as their local neighbors, that are connected to it in a graph. Specifically, we assume we are given a graph $\mathcal{G} = (\mathcal{V},\mathcal{E})$ with $\mathcal{V} = \{1,\ldots,n\}$; nodes $i$ and $j$ can exchange messages if and only if $(i,j) \in \mathcal{E}$. We denote by $\mathcal{N}_i$ the set of neighbors of node $i$. We make the following fairly standard assumption regarding connectivity of graph $G$.
\begin{assumption}\label{assump:Connectivity}
The graph $G$ is undirected and connected. 
\end{assumption}

\section{Main Algorithm}\label{sec:Algorithm}
In this section, we propose a novel distributed method, namely, a distributed Lagrangian method, for {\sf P}. Our method can be presented as a distributed version of well-known classical Lagrangian methods \cite{Bertsekas1999}. We start this section by giving a brief motivation for our method.

Without loss of generality, we assume that each node $i$ knows the constant $b_i$ such that $\sum_{i=1}^n b_i= b$. Here $b_i$ can be interpreted as the initial resource allocation at node $i$. One specific choice is to initially distribute $b$ equally to all nodes, i.e., $b_i=b/n$ for all $i = 1,\ldots,n$.  We note that the design of our algorithm as well as our analysis given later does not depend on the choice of these $b_i$ since they are only used for notational convenience. 

We now explain the mechanics of our approach. Consider the following Lagrangian function ${\Lcal:\Xcal\times\Rset\rightarrow\Rset}$ of ${\sf P}$ 
\begin{align}
\Lcal(\xbf,\lambda) := \sum_{i=1}^n f_i(x_i) + \lambda\left(\sum_{i=1}^n x_i - b_i\right),\label{alg:Lagrangian}
\end{align} 
where $\lambda\in\Rset$ is the Lagrangian multiplier associated with the coupling equality constraint in \eqref{prob:OPI}. The dual  function $d:\Rset\rightarrow\Rset$ of problem ${\Psf}$ for some $\lambda$ value is then defined as
\begin{align}
d(\lambda) &:= \underset{\xbf\in\Rset^n}{\text{min}}\left( \sum_{i=1}^n f_i(x_i) + \lambda \left(\sum_{i=1}^n (x_i-b_i)\right) \right)\nonumber\\
&= \sum_{i=1}^n \{-f_i^*(-\lambda)-\lambda b_i\},\nonumber 
\end{align} 
where $f_i^*$ is the Fenchel conjugate of $f_i$, defined by 
\begin{align*}
f_i^*(u) = \underset{x\in\Xcal_i}{\sup}\{ux - f_i(x)\}.
\end{align*}
The dual problem of ${\Psf}$, denoted by ${\sf DP}$, is given by
\begin{align}
{\sf DP}\ : \ \underset{\lambda\in\Rset}{\text{max}} \sum_{i=1}^n \{-f_i^*(-\lambda)-\lambda b_i\},\nonumber	
\end{align} 
which is then equivalent to solving 
\begin{align}
\underset{\lambda\in\Rset}{\text{min}} \sum_{i=1}^n \underbrace{f_i^*(-\lambda)\ + \ \lambda b_i}_{=q_i(\lambda)}, \label{alg:mindual}
\end{align}
where $q_i \ : \ \Rset\rightarrow\Rset$ is a convex function since $f_i^*$ is a convex function \cite{Bertsekas1999}. 

A common approach to solve problem \eqref{prob:OPI} is  Lagrangian methods \cite{Bertsekas1999}, which require a central coordinator to update and distribute the multiplier $\lambda$ to the nodes. The key idea of our approach is to eliminate this requirement by utilizing the distributed subgradient method presented in \cite{Nedic2009} to compute the solution of \eqref{alg:mindual}. In particular, we have that each node $i$ stores a local copy $\lambda_i$ of the Lagrange multiplier $\lambda$, and then iteratively updates $\lambda_i$ upon communicating with its neighbors. The update of $\lambda_i$ is comprised of two steps, namely, a consensus step and a so-called ``local gradient descent" step. These two steps, coupled with the primal update in the Lagrangian approach, results in our distributed Lagrangian algorithm, presented in Algorithm \ref{alg:DLM}. 

The updates in Algorithm \ref{alg:DLM} have a simple implementation: first, at time $k\geq 0$, each node $i$  in step \eqref{DLM:viUpdate} broadcasts the current value of $\lambda_i(k)$ to its neighbors. Node $i$ computes $v_i$, as given by the weighted average of the current local values of its neighbors and its own value. The updates of $x_i(k+1)$ and $\lambda_i(k+1)$ then do not require any additional communications among the nodes at step $k$. In particular, the update of the primal variables $x_i$ is executed in step \eqref{DLM:xiUpdate}. Finally, in step \eqref{DLM:LambdaiUpdate} $\lambda_i$ is updated by moving a distance of $\alpha(k)$ along a subgradient of $q_i$ at $v_i(k+1)$, given by
\begin{equation}
b_i - x_i(k+1) \in \partial q_i(v_i(k+1)),\label{alg_DLM:isubgradient}
\end{equation}  
where $\partial q_i(v_i(k+1))$ is the sub-differential set of $q_i$ at ${v_i(k+1)}$. We note that our method maintains the feasibility of the nodes' local constraints at every iteration, i.e., $x_i(k)\in\mathcal{X}_i$ for all $k\geq 0$. Furthermore, the nodes do not need to store the variables $v_i$ in step \eqref{DLM:viUpdate} since these are used only for notational convenience.


Let $A$ be the communication matrix whose the $(i,j)-$th entries are $a_{ij}$. Moreover, for short we denote the function $q:\mathbb{R}^n\rightarrow\mathbb{R}$ as $$q(\bld) = \sum_{i=1}^n q_i(\lambda_i).$$ The updates \eqref{DLM:viUpdate}--\eqref{DLM:LambdaiUpdate} then can be written compactly in vector form as,
\begin{align}
&\mathbf{v}(k+1) = A\bld(k)\nonumber\\
&\mathbf{x}(k+1)\in \arg\min_{\mathbf{x}\in\mathcal{X}}\sum_{i\in \mathcal{V}}f_i(x_i) + v_i(k+1)(x_i-b_i) \nonumber\\
&\bld(k+1) = \mathbf{v}(k+1) - \alpha(k)\nabla q(\mathbf{v}(k+1)),\nonumber
\end{align}
where $\nabla q(\cdot)$, with some abuse of notation, is the subgradient of the dual function $q$, i.e., $$\nabla q(\bld) = [\partial q_1(\lambda_1),\ldots,\partial q_n(\lambda_n)]^T.$$

Throughout the remainder of this paper, we assume that $A$ satisfies the following assumption.
\begin{assumption}\label{assump:doublystochastic}
$A$ is a doubly stochastic matrix, i.e., $\sum_{i=1}^n a_{ij} = \sum_{j=1}^n a_{ij} = 1$ with $a_{ii}>0$. Moreover, the weights $a_{ij} > 0$ if and only if $(i,j)\in \mathcal{E}$ otherwise $a_{ij} = 0$.
\end{assumption}

In the sequel, let $\sigma_2$ be the second largest singular value of $A$. By the Courant-Fisher theorem \cite{HJbook}, since $A$ is doubly stochastic and the graph is connected, we have $\sigma_2\in (0,1)$.

\begin{algorithm}
\caption{A Distributed Lagrangian Method (DLM)}\label{alg:DLM}
1. \textbf{Initialize}: Each node $i$ initializes its variables as $x_i(0) \in \Rset$,  $\lambda_i(0)\in\mathbb{R}$.\\
2. \textbf{Iteration}: For $k\geq 0$ each node $i$ implements the following steps
\smallskip
\vspace{-2mm}
\begin{align}
&v_i(k+1) = \sum_{j\in \mathcal{N}_i}a_{ij}\lambda_j(k)\label{DLM:viUpdate}\\
&x_i(k+1)\!\in\! \arg\!\!\min_{x_i\in\mathcal{X}_i}\!f_i(x_i)\! +\! v_i(k+1)(x_i-b_i)\label{DLM:xiUpdate}\\
&\lambda_i(k+1) = v_i(k+1) - \alpha(k)(x_i(k+1)-b_i)\label{DLM:LambdaiUpdate}
\end{align}
\end{algorithm}

\section{Convergence Analysis}\label{sec:Analysis}
The goal of this section is to provide an analysis for the convergence of the distributed Lagrangian method proposed in the previous section. Specifically, we will show that the method achieves an asymptotic convergence to the optimal value of {\sf P}. In addition, to understand how fast the algorithm obtains the optimal objective value we utilize the standard technique from centralized subgradient methods. Specifically, the method achieves an asymptotic convergence to the optimal value at the rate $\mathcal{O}(n\ln(k)/(1-\sigma_2)\sqrt{k})$. Here the parameter $\sigma_2$ represents the spectral properties of network connectivity. We note that such an explicit upper bound on the convergence rate for distributed resource allocation is not available in the literature. More detail will be given shortly. 

We start our analysis by introducing more notation. Given an optimal solution $\mathbf{x}^*\in\mathcal{S}^*$ of {\sf P},  let $\lambda^*$ be a dual optimal of {\sf DP}, i.e., $(\mathbf{x}^*,\lambda^*)$ is a saddle point of $\mathcal{L}(\xbf,\lambda)$ 
$$\Lcal(\xbf^*,\lambda)\leq\Lcal(\xbf^*,\lambda^*)\leq\Lcal(\xbf,\lambda^*)\;\ {\forall \xbf\in\mathbb{R}^n,\lambda\in\mathbb{R}}.$$

Moreover, let $\bld^*$ be a vector such that ${\bld^* = (\lambda^*,\ldots,\lambda^*)^T\in\mathbb{R}^n}$. Denote by $\mathcal{L}_i:\mathcal{X}_i\times\mathbb{R}\rightarrow\mathbb{R}$ the local Lagrangian function at node $i$
\begin{align*}
\mathcal{L}_i(x_i,v_i) = f_i(x_i) + v_i(x_i-b_i),
\end{align*}
and with some abuse of notation, the global Lagrangian function $\mathcal{L}:\mathcal{X}\times\mathbb{R}^n\rightarrow\mathbb{R}$ is defined as
\begin{align}
\mathcal{L}(\mathbf{x},\mathbf{v}) = \sum_{i\in\mathcal{V}} \mathcal{L}_i(x_i,v_i).\label{RAPCR:Lagragian}
\end{align}
Our first result is the Lipschitz continuity of the functions $q_i$ given in the following proposition.

\begin{proposition}\label{proposition:subgbound}
Let the sequences $\{x_i(k)\}$ and $\{\lambda_i(k)\}$ for all $i\in\mathcal{V}$ be generated by the DLM algorithm. Then there exists a positive constant $C$ such that for all $k\geq 0$,
\begin{align}
|\partial q_i(v_i(k+1))|\leq C,\quad \text{ for all }  i\in \mathcal{V}.\label{prop:idualSB}
\end{align}
\end{proposition}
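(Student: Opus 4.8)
The plan is to read off the subdifferential of $q_i$ explicitly and then invoke compactness of $\mathcal{X}_i$. Recall from \eqref{alg:mindual} that $q_i(\lambda) = f_i^*(-\lambda) + \lambda b_i$. Since the map $\lambda \mapsto -\lambda$ is linear, the chain rule for subdifferentials of convex functions gives $\partial q_i(\lambda) = b_i - \partial f_i^*(-\lambda)$, where the right-hand side denotes the set $\{b_i - s : s\in\partial f_i^*(-\lambda)\}$. Thus bounding $|\partial q_i(v_i(k+1))|$ reduces to bounding $\partial f_i^*(-v_i(k+1))$.

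The key step is the conjugate--subgradient correspondence. By the definition $f_i^*(u) = \sup_{x\in\mathcal{X}_i}\{ux - f_i(x)\}$, Assumption \ref{assump:ConvexAssump} (each $f_i$ proper, closed, convex) together with compactness of $\mathcal{X}_i$ guarantees the supremum is attained, and every element of $\partial f_i^*(u)$ is a maximizer $x\in\mathcal{X}_i$ of this problem; in the nonsmooth case $\partial f_i^*(u)$ is the convex hull of such maximizers, hence still contained in the convex set $\mathcal{X}_i$ \cite{Bertsekas1999}. Evaluating at $u = -v_i(k+1)$, the maximizers coincide with the primal update \eqref{DLM:xiUpdate}, which recovers exactly the subgradient selection $b_i - x_i(k+1)\in\partial q_i(v_i(k+1))$ recorded in \eqref{alg_DLM:isubgradient}. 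Consequently every element of $\partial q_i(v_i(k+1))$ has the form $b_i - x$ for some $x\in\mathcal{X}_i$.

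It then remains to apply compactness. Since each $\mathcal{X}_i$ is compact, $M_i := \sup_{x\in\mathcal{X}_i}|x|$ is finite, and the triangle inequality yields $|b_i - x|\le |b_i| + M_i$ for every $x\in\mathcal{X}_i$. Setting $C := \max_{1\le i\le n}(|b_i| + M_i)$, which is finite because there are finitely many nodes, we obtain $|\partial q_i(v_i(k+1))|\le C$ uniformly in $i$ and $k$. Note the bound holds at every point of the domain, not merely at the iterates, so it in fact establishes that each $q_i$ is globally $C$-Lipschitz, which is the stated purpose of the proposition.

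I expect the only real content to lie in the second step, namely the inclusion $\partial f_i^*(-\lambda)\subseteq \mathcal{X}_i$. This requires attainment of the supremum in the conjugate (guaranteed by compactness of $\mathcal{X}_i$ and lower semicontinuity of $f_i$) and careful handling of the set-valued subdifferential when $f_i^*$ fails to be differentiable. Once this correspondence is in hand, the uniform bound follows immediately from boundedness of $\mathcal{X}_i$, and everything else is routine.
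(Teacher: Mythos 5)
Your proof is correct and follows essentially the same route as the paper's: both arguments reduce to the observation that subgradients of $q_i$ at the iterates have the form $b_i - x$ with $x\in\mathcal{X}_i$ (the paper simply invokes the selection in \eqref{alg_DLM:isubgradient} together with $x_i(k+1)\in\mathcal{X}_i$), so compactness of $\mathcal{X}_i$ gives the uniform bound. Your version is somewhat more thorough in that it bounds the entire subdifferential set via the inclusion $\partial f_i^*(-\lambda)\subseteq\mathcal{X}_i$ and notes the resulting global Lipschitz continuity, but this is a fleshing-out of the same idea rather than a different approach.
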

\vspace{0.3cm}
\begin{proof}
Since $\mathcal{X}_i$ is a compact set and $x_i(k)\in\mathcal{X}_i$ $\forall k\geq 0$, by \eqref{alg_DLM:isubgradient} there exists a positive constant $C_i$ such that $|\partial q_i(v_i(k))|\leq C_i$ $\forall k$. Letting $C=\max_{i} C_i $ gives \eqref{prop:idualSB}.
\end{proof}
\vspace{0.2cm}
Recall that the updates \eqref{DLM:viUpdate} and \eqref{DLM:LambdaiUpdate} are the distributed subgradient methods, studied in \cite{Nedic2009} for problem \eqref{alg:mindual}. Moreover, since the subgradient of $q_i$ is bounded, we can now apply the result in \cite[Proposition 4]{Nedic2009} to show the convergence of $\lambda_i$ to $\lambda^*$, a dual optimal. Due to the limited space, we skip the proof here and refer the readers to \cite{Nedic2009}.  

\begin{lemma}[\cite{Nedic2009}]\label{CLlem:DualConvergence}
Let Assumptions \ref{assump:ConvexAssump}--\ref{assump:doublystochastic} hold. Let the sequences $\{x_i(k)\}$ and $\{\lambda_i(k)\}$ for all $ i\in \mathcal{V}$ be generated by the DLM algorithm. Assume that the step size $\alpha(k)$ is positive, nonincreasing with $\alpha(0) = 1$, and satisfies the following conditions,
\begin{align}
\sum_{k=0}^\infty \alpha(k) = \infty,\quad \sum_{k=0}^\infty \alpha^2(k) < \infty.\label{CLlem:stepsize}
\end{align}
Then, given a dual optimal $\lambda^*$ of \eqref{alg:mindual}, we have
\begin{align}
\lim_{k\rightarrow\infty}\lambda_i(k) = \lambda^*,\quad \text{for all } i\in\mathcal{V}.\label{CLlem:dualconv}
\end{align}
\end{lemma}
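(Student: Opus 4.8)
The plan is to recognize that the pair of recursions \eqref{DLM:viUpdate} and \eqref{DLM:LambdaiUpdate} is precisely the distributed subgradient method of \cite{Nedic2009} applied to the separable dual minimization \eqref{alg:mindual}, and then to verify that the hypotheses of \cite[Proposition 4]{Nedic2009} hold so that its conclusion may be invoked. Each node $i$ maintains a local copy $\lambda_i$ of the scalar multiplier, forms the doubly-stochastic mixture $v_i(k+1)=\sum_{j\in\mathcal{N}_i}a_{ij}\lambda_j(k)$ of its neighbors' copies, and then steps a distance $\alpha(k)$ along the local subgradient $b_i-x_i(k+1)\in\partial q_i(v_i(k+1))$ furnished by \eqref{alg_DLM:isubgradient}. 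This is exactly the ``mix-then-descend'' structure whose convergence is established in \cite{Nedic2009}, so the remaining work is bookkeeping: matching each ingredient of that analysis against our standing assumptions.

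First I would assemble those ingredients. Convexity of each component $q_i$ was noted immediately after \eqref{alg:mindual}. The uniform bound $|\partial q_i(v_i(k+1))|\le C$ on the subgradients is supplied by Proposition \ref{proposition:subgbound}, which rests on the compactness of $\Xcal_i$. The matrix $A$ is doubly stochastic with positive diagonal and the graph is connected (Assumptions \ref{assump:Connectivity} and \ref{assump:doublystochastic}), which by the discussion following Assumption \ref{assump:doublystochastic} yields $\sigma_2\in(0,1)$; this is the spectral gap that drives the agreement estimate. The step size obeys the Robbins--Monro conditions \eqref{CLlem:stepsize}. Finally, Slater's condition (Assumption \ref{assump:Slater}) guarantees strong duality and a nonempty dual optimal set, so a minimizer $\lambda^*$ of \eqref{alg:mindual} exists to serve as the target of the convergence claim.

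With these in hand, the convergence argument of \cite{Nedic2009} proceeds in two stages, which I would reproduce if a self-contained proof were required. Writing $\bar{\lambda}(k)=\tfrac{1}{n}\sum_{i=1}^n\lambda_i(k)$ for the network average, double stochasticity gives $\tfrac1n\sum_i v_i(k+1)=\bar{\lambda}(k)$, so $\bar{\lambda}(k)$ follows a centralized subgradient recursion for \eqref{alg:mindual}. The first stage is a \emph{consensus} estimate: using $\sigma_2<1$ for geometric mixing together with the subgradient bound $C$ and $\alpha(k)\to 0$, one shows the disagreement $\max_i|\lambda_i(k)-\bar{\lambda}(k)|\to 0$. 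The second stage is an \emph{optimality} estimate for the average: a Lyapunov inequality on $|\bar{\lambda}(k)-\lambda^*|^2$, combined with $\sum_k\alpha(k)=\infty$ and $\sum_k\alpha^2(k)<\infty$, forces $\bar{\lambda}(k)\to\lambda^*$. Combining the two stages yields $\lambda_i(k)\to\lambda^*$ for every $i$, which is \eqref{CLlem:dualconv}.

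The step I expect to be the main obstacle is pinning down convergence of the \emph{actual} iterate sequence $\{\lambda_i(k)\}$ to a \emph{single} dual optimum, rather than merely convergence of a running average or of a subsequence. Since the dual optimal set need not be a singleton, one cannot simply extract a convergent subsequence and identify its limit; instead the Lyapunov inequality on $|\bar{\lambda}(k)-\lambda^*|^2$ must be established for \emph{every} $\lambda^*$ in the optimal set and then upgraded, via the summability $\sum_k\alpha^2(k)<\infty$ and the fact that the consensus error enters only through an $\alpha(k)$-weighted, summable term, to genuine convergence of $|\bar{\lambda}(k)-\lambda^*|$ for one particular optimum. Keeping the cross terms between the consensus error and the subgradient step summable, so that they do not destroy the near-monotone behaviour of the Lyapunov sequence, is the delicate bookkeeping at the heart of the argument.
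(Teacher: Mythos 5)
Your proposal matches the paper's own argument: the paper likewise observes that updates \eqref{DLM:viUpdate} and \eqref{DLM:LambdaiUpdate} are exactly the distributed subgradient method of \cite{Nedic2009} applied to the dual problem \eqref{alg:mindual}, notes that the subgradient bound from Proposition \ref{proposition:subgbound} makes the hypotheses of \cite[Proposition 4]{Nedic2009} hold, and then invokes that result directly. Your additional sketch of the consensus-plus-averaged-descent argument inside \cite{Nedic2009} is consistent with, and merely elaborates on, what the paper delegates to the citation.
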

\vspace{0.2cm}
A specific choice of the squence of stepsize is $\alpha(k) = 1/k$, which obviously satisfies \eqref{CLlem:stepsize}. We are now ready to state our first main result, that is the distributed Lagrangian method achieves an asymptotic convergence to the optimal value of problem {\Psf}\footnote{There is an error in Theorem 1 in the version appeared on IEEE Conference on Control and Technology Applications 2017 \cite{DoanB2016}, which is corrected by Theorem \ref{CLthm:primalconvergence} given here}. 

\begin{theorem}\label{CLthm:primalconvergence}
Let all assumptions given in Lemma \ref{CLlem:DualConvergence} hold. Let $\{x_i(k)\}$ and $\{\lambda_i(k)\}$ for all $i\in\mathcal{V}$ be generated by Algorithm \ref{alg:DLM}. Then, for all integers $k\geq 0$ we have,
\vspace{-0.2cm}
\begin{align}
\lim_{k\rightarrow\infty}\mathcal{L}(\mathbf{x}(k+1),\boldsymbol{\lambda}(k))=f(\xbf^*).\label{CLthm:asympconv}
\end{align}
\end{theorem}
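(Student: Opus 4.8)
The plan is to split the global Lagrangian evaluated at $(\mathbf{x}(k+1),\boldsymbol{\lambda}(k))$ into a term that the primal update forces to equal the negative dual objective plus a residual that disappears in the limit, and then to combine dual convergence with strong duality. The starting observation is that the primal step \eqref{DLM:xiUpdate} selects $x_i(k+1)$ as the minimizer of $f_i(x_i)+v_i(k+1)(x_i-b_i)$ over $\mathcal{X}_i$; by the definition of the Fenchel conjugate this minimum equals $-q_i(v_i(k+1))$, so that $\mathcal{L}(\mathbf{x}(k+1),\mathbf{v}(k+1)) = -q(\mathbf{v}(k+1))$.

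First I would rewrite the quantity of interest as
\[
\mathcal{L}(\mathbf{x}(k+1),\boldsymbol{\lambda}(k)) = -q(\mathbf{v}(k+1)) + \sum_{i\in\mathcal{V}}\bigl(\lambda_i(k)-v_i(k+1)\bigr)\bigl(x_i(k+1)-b_i\bigr),
\]
which follows simply because $\mathcal{L}$ is affine in its second argument and $\mathbf{v}(k+1)=A\boldsymbol{\lambda}(k)$. The two pieces are then handled separately.

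For the residual sum, I would use \eqref{alg_DLM:isubgradient} together with Proposition \ref{proposition:subgbound} to bound $|x_i(k+1)-b_i|=|\partial q_i(v_i(k+1))|\leq C$ uniformly in $k$. Since the rows of $A$ sum to one, each $v_i(k+1)=\sum_j a_{ij}\lambda_j(k)$ is a convex combination of the quantities $\lambda_j(k)$, all of which converge to $\lambda^*$ by Lemma \ref{CLlem:DualConvergence}; hence both $\lambda_i(k)$ and $v_i(k+1)$ tend to $\lambda^*$, so $\lambda_i(k)-v_i(k+1)\to 0$ and the residual vanishes. For the leading term, $v_i(k+1)\to\lambda^*$ and the continuity of each finite convex function $q_i$ on $\mathbb{R}$ give $-q(\mathbf{v}(k+1))\to -q(\boldsymbol{\lambda}^*)=\sum_{i}\{-f_i^*(-\lambda^*)-\lambda^* b_i\}=d(\lambda^*)$. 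Strong duality, which holds under Slater's condition (Assumption \ref{assump:Slater}), then identifies $d(\lambda^*)=f(\mathbf{x}^*)$, and assembling the two limits yields \eqref{CLthm:asympconv}.

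I expect the main obstacle to be the bookkeeping in the decomposition step: the theorem is stated at $\boldsymbol{\lambda}(k)$, whereas the primal minimization is carried out at the averaged multiplier $\mathbf{v}(k+1)$, so one must certify that the discrepancy between the two is asymptotically negligible. This is exactly where the uniform subgradient bound of Proposition \ref{proposition:subgbound} is needed, preventing the bounded drift $x_i(k+1)-b_i$ from amplifying the vanishing consensus error $\lambda_i(k)-v_i(k+1)$; once this is in place, the remaining convergence follows directly from the cited dual convergence and from continuity of the conjugate-based dual functions.
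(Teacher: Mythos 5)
Your proof is correct, and it reaches the same conclusion from the same three ingredients the paper uses (the identity $\mathcal{L}_i(x_i(k+1),v_i(k+1))=-q_i(v_i(k+1))$ coming from the primal step \eqref{DLM:xiUpdate}, dual convergence from Lemma \ref{CLlem:DualConvergence}, and strong duality), but it is organized differently. The paper sandwiches the optimality gap $\sum_i\mathcal{L}_i(x_i^*,v_i(k+1))-\mathcal{L}_i(x_i(k+1),v_i(k+1))$ between $0$ and the vanishing bound $C\|\mathbf{v}(k+1)-\bld^*\|_1+\sum_i v_i(k+1)(x_i^*-b_i)$, using the subgradient bound \eqref{prop:idualSB} as a Lipschitz estimate $q_i(v_i(k+1))-q_i(\lambda^*)\leq C|\lambda^*-v_i(k+1)|$; you instead write $\mathcal{L}(\mathbf{x}(k+1),\bld(k))=-q(\mathbf{v}(k+1))+\sum_i(\lambda_i(k)-v_i(k+1))(x_i(k+1)-b_i)$ and pass to the limit term by term, invoking plain continuity of the finite convex functions $q_i$. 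The two routes buy essentially the same thing, but your affine-in-$\lambda$ decomposition is cleaner on one point the paper glosses over: the theorem is stated at $\bld(k)$ while the sandwich in \eqref{CLthm:Eq2} is established at $\mathbf{v}(k+1)$, and the paper's transition between the two (the line culminating in \eqref{CLthm:Eq3}) is justified only by the remark that both sequences converge to $\bld^*$. Your explicit treatment of the residual $\sum_i(\lambda_i(k)-v_i(k+1))(x_i(k+1)-b_i)$, controlled by the uniform bound $|x_i(k+1)-b_i|\leq C$ from \eqref{alg_DLM:isubgradient} and Proposition \ref{proposition:subgbound}, is exactly the missing bookkeeping, so your version is if anything slightly more complete than the paper's.
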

\vspace{0.3cm}
\begin{proof}
We first have $(\xbf^*,\lambda^*)$ is a saddle point of the Lagrangian \eqref{alg:Lagrangian} where $\lambda^*$ satisfies \eqref{CLlem:dualconv}. Recall from \eqref{RAPCR:Lagragian} that
\begin{align}
\mathcal{L}(\mathbf{x},\mathbf{v}) = \sum_{i\in\mathcal{V}}\mathcal{L}_i(x_i,v_i) = \sum_{i\in\mathcal{V}}f_i(x_i) + v_i(x_i-b_i).\nonumber
\end{align}
To show our main result, we will show the following relation,
\begin{align}
&0\leq \sum_{i\in\mathcal{V}}\mathcal{L}_i(x_i^*,v_i(k+1))-\mathcal{L}_i(x_i(k+1),v_i(k+1))\nonumber\\
&\leq C\|\mathbf{v}(k+1)-\bld^*\|_{1}  + \sum_{i\in\mathcal{V}}v_i(k+1)(x_i^*-b_i).\label{CLthm:Eq2}
\end{align}
We note that by Lemma \ref{CLlem:DualConvergence}, we have $\lim_{k\rightarrow\infty}\lambda_i(k)= \lambda^*$ $\forall i\in\mathcal{V}$, implying $\lim_{k\rightarrow\infty}v_i(k+1) = \lambda^*$ since $ v_i(k+1) = \sum_{j\in\mathcal{N}_i}a_{ij}\lambda_j(k)$ and $A$ is a doubly stochastic matrix. In addition, since $x_i^*$ satisfies  $\sum_{i\in\mathcal{V}}x_i^*=\sum_{i\in\mathcal{V}}b_i= b$ we have
\begin{align}
&\lim_{k\rightarrow\infty} \|\mathbf{v}(k+1)-\bld^*\|_{1} = 0,\nonumber\\
&\lim_{k\rightarrow\infty}\sum_{i\in\mathcal{V}}v_i(k+1)(x_i^*-b_i) = 0.\nonumber
\end{align}
Thus, we obtain
\begin{align}
0&\leq\lim_{k\rightarrow\infty}\sum_{i\in\mathcal{V}}\mathcal{L}_i(x_i^*,v_i(k+1))-\mathcal{L}_i(x_i(k+1),v_i(k+1))\nonumber\\
&=\lim_{k\rightarrow\infty}\sum_{i\in\mathcal{V}}\mathcal{L}_i(x_i^*,\lambda^*)-\mathcal{L}_i(x_i(k+1),v_i(k+1)) = 0,\nonumber
\end{align}
which implies that
\begin{align}
&0\leq \mathcal{L}(\mathbf{x}^*,\bld^*) - \limsup_{k\rightarrow\infty}\mathcal{L}(\mathbf{x}(k+1),\bld(k))\nonumber\\
&= f(\mathbf{x^*}) - \limsup_{k\rightarrow\infty}\mathcal{L}(\mathbf{x}(k+1),\bld(k))= 0,\label{CLthm:Eq3}\displaybreak[0]
\end{align}
where we use the fact that $\mathcal{L}(\mathbf{x}^*,\bld^*) = f(\xbf^*)$ and $\lim_{k\rightarrow\infty}\bld(k) = \lim_{k\rightarrow\infty}\mathbf{v}(k) = \bld^*$.

We are now proceeding to show \eqref{CLthm:Eq2}. Since $x_i(k+1)$ satisfies \eqref{DLM:xiUpdate} and by the definition of $\mathcal{L}_i$ in \eqref{RAPCR:Lagragian} we have for $\forall i\in\mathcal{V}$ and $k\geq 0$,
\begin{align}
0 \leq \mathcal{L}_i(x_i^*,v_i(k+1)) - \mathcal{L}_i(x_i(k+1),v_i(k+1)),\nonumber
\end{align}
which when summing for all $i\in\mathcal{V}$ implies that
\begin{align}
0&\leq\sum_{i\in\mathcal{V}}\mathcal{L}_i(x_i^*,v_i(k+1))-\mathcal{L}_i(x_i(k+1),v_i(k+1))\nonumber\\
&= \sum_{i\in\mathcal{V}} f_i(x_i^*) + v_i(k+1)(x_i^*-b_i)\nonumber\\
&-\sum_{i\in\mathcal{V}} f_i(x_i(k+1)) + v_i(k+1)(x_i(k+1)-b_i).\label{CLthm:Eq6}
\end{align}
By Assumption \ref{assump:ConvexAssump} the zero duality gap holds, i.e.,
\begin{align}
\sum_{i\in\mathcal{V}} f_i(x_i^*) = \sum_{i\in\mathcal{V}}d_i(\lambda^*) \stackrel{\eqref{alg:mindual}}{=} -\sum_{i\in\mathcal{V}}q_i(\lambda^*).\nonumber
\end{align}
Moreover by \eqref{alg:mindual} and \eqref{DLM:xiUpdate} we have
\begin{align}
q_i(v_i(k+1))\!\!=\!- f_i(x_i(k+1))\!\!- v_i(k+1)(x_i(k+1)-b_i).\nonumber
\end{align}
Substitute the previous two preceding relations into \eqref{CLthm:Eq6} we obtain
\begin{align}
0&\leq\sum_{i\in\mathcal{V}}\mathcal{L}_i(x_i^*,v_i(k+1))-\mathcal{L}_i(x_i(k+1),v_i(k+1))\nonumber\\
&= \sum_{i\in\mathcal{V}} q_i(v_i(k+1)) - q_i(\lambda^*) + v_i(k+1)(x_i^*-b_i)\nonumber\\
&\leq \sum_{i\in\mathcal{V}} C|\lambda^*-v_i(k+1)|  + v_i(k+1)(x_i^*-b_i),\nonumber
\end{align}
where the last inequality is due to \eqref{prop:idualSB}. This concludes our proof.
\end{proof}
To show the convergence rate of the distributed Lagrangian method, we need the following properties of \eqref{DLM:LambdaiUpdate} studied in \cite{Nedic2015}, for the case of undirected connected graph.

\begin{lemma}[Lemma 3 \cite{Nedic2015}]\label{lem:pertbconsensus}
Let Assumptions \ref{assump:Connectivity}--\ref{assump:doublystochastic} hold. Let the sequence $\{\lambda_i(k)\}$ for all $i\in\mathcal{V}$ be generated by Algorithm \ref{alg:DLM}. Assume that the step size $\alpha(k)$ is nonincreasing with $\alpha(0) = 1$. Then the following statements hold,
\begin{enumerate}
\item  For all $i=1,\ldots,n,$ and $k\geq 0$
\begin{align}
&|\lambda_i(k) - \bar{\lambda}(k)|\nonumber\\
&\leq \sigma_2^k\|\bld(0)\|_1+ \sqrt{n}C\sum_{t=0}^{k-1} \alpha(t)\sigma_2^{k-1-t},\label{pertblem:averbound}
\end{align}
where $\sigma_2\in (0,1)$ is the second largest singular value of $A$.
\item If in addition $\alpha(k) = 1/\sqrt{k}$ for $k\geq 1$ and $\alpha(0) = 1$, then for all $i=1,\ldots,n,$ and an integer $K\geq 1$
\begin{align}
&\sum_{k=0}^K\!\! \alpha(k)|\lambda_i(k) - \bar{\lambda}(k)|\nonumber\\
&\leq \frac{
\|\bld(0)\|_{1}}{1-\sigma_2}+ \frac{\sqrt{n}C(2+\ln(K))}{1-\sigma_2}\cdot\label{pertblem:averrate}
\end{align}
\end{enumerate}
\end{lemma}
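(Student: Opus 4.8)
The plan is to track the dynamics of the \emph{disagreement} vector $\v{e}(k) := \bld(k) - \bar{\lambda}(k)\mathbf{1}$, where $\bar{\lambda}(k) = \tfrac{1}{n}\mathbf{1}^T\bld(k)$ is the running average of the local multipliers. Writing the update \eqref{DLM:LambdaiUpdate} in vector form as $\bld(k+1) = A\bld(k) - \alpha(k)\v{g}(k)$, where $\v{g}(k)$ has entries $x_i(k+1)-b_i \in \partial q_i(v_i(k+1))$, Proposition \ref{proposition:subgbound} guarantees $|g_i(k)|\leq C$ and hence $\|\v{g}(k)\|_2 \leq \sqrt{n}C$. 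Since $A$ is doubly stochastic we have $\mathbf{1}^T A = \mathbf{1}^T$, so averaging gives $\bar{\lambda}(k+1) = \bar{\lambda}(k) - \tfrac{\alpha(k)}{n}\mathbf{1}^T\v{g}(k)$; subtracting this and using $A\mathbf{1} = \mathbf{1}$ yields the clean recursion
\begin{align}
\v{e}(k+1) = A\v{e}(k) - \alpha(k)\left(\v{g}(k) - \tfrac{1}{n}(\mathbf{1}^T\v{g}(k))\mathbf{1}\right).\nonumber
\end{align}

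The key structural fact I would invoke is the contraction of $A$ on the subspace orthogonal to $\mathbf{1}$: because $A$ is doubly stochastic with a connected graph, $\mathbf{1}$ is a top right singular vector of $A$ (equivalently, an eigenvector of $A^TA$ at eigenvalue $1$), so for every $\v{e}\perp\mathbf{1}$ one has $\|A\v{e}\|_2 \leq \sigma_2\|\v{e}\|_2$ with $\sigma_2$ the second largest singular value. Noting that $\mathbf{1}^T\v{e}(k)=0$ for all $k$ and that projecting $\v{g}(k)$ onto $\mathbf{1}^\perp$ only shrinks its norm, I obtain the scalar recursion $\|\v{e}(k+1)\|_2 \leq \sigma_2\|\v{e}(k)\|_2 + \sqrt{n}C\alpha(k)$. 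Unrolling this, bounding $\|\v{e}(0)\|_2 \leq \|\bld(0)\|_2 \leq \|\bld(0)\|_1$, and using $|\lambda_i(k)-\bar{\lambda}(k)| = |e_i(k)| \leq \|\v{e}(k)\|_2$, delivers the first statement \eqref{pertblem:averbound} directly.

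For the second statement I would substitute \eqref{pertblem:averbound} into $\sum_{k=0}^K \alpha(k)|\lambda_i(k)-\bar{\lambda}(k)|$ and treat the two resulting terms separately. The term carrying $\|\bld(0)\|_1$ equals $\|\bld(0)\|_1\sum_{k=0}^K\alpha(k)\sigma_2^k$, which, since $\alpha(k)\leq\alpha(0)=1$, is bounded by the geometric series $\|\bld(0)\|_1/(1-\sigma_2)$. For the double-sum term $\sqrt{n}C\sum_{k=0}^K\alpha(k)\sum_{t=0}^{k-1}\alpha(t)\sigma_2^{k-1-t}$, I would swap the order of summation to $\sqrt{n}C\sum_{t=0}^{K-1}\alpha(t)\sum_{k=t+1}^{K}\alpha(k)\sigma_2^{k-1-t}$, use monotonicity $\alpha(k)\leq\alpha(t)$ for $k>t$ to replace $\alpha(k)\alpha(t)$ by $\alpha(t)^2$, reindex and sum the geometric series in $k$ to extract a factor $1/(1-\sigma_2)$, and finally bound $\sum_{t=0}^{K}\alpha(t)^2 = 1 + \sum_{t=1}^K\tfrac{1}{t} \leq 2+\ln(K)$ by the standard harmonic-series estimate. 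Combining the two pieces gives \eqref{pertblem:averrate}.

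The step requiring the most care is the spectral contraction claim $\|A\v{e}\|_2\leq\sigma_2\|\v{e}\|_2$ on $\mathbf{1}^\perp$: it rests on $\mathbf{1}$ spanning the top right singular subspace of the doubly stochastic $A$ and on $\sigma_2<1$, which is precisely where Assumptions \ref{assump:Connectivity} and \ref{assump:doublystochastic} together with the Courant--Fischer characterization of $\sigma_2$ enter. The remainder of the second part is elementary but demands careful bookkeeping of the index ranges when exchanging the order of the double summation and when reindexing the geometric sum.
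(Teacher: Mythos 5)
Your argument is correct. Note, however, that the paper itself does not prove this lemma: it imports it verbatim as Lemma 3 of \cite{Nedic2015}, so there is no in-paper proof to compare against. What you have supplied is the standard perturbed-consensus argument that underlies the cited result: write $\bld(k+1)=A\bld(k)+\alpha(k)\boldsymbol{\epsilon}(k)$ with a uniformly bounded perturbation, pass to the disagreement vector $\v{e}(k)=\bld(k)-\bar{\lambda}(k)\mathbf{1}$, use double stochasticity to get the exact recursion $\v{e}(k+1)=A\v{e}(k)-\alpha(k)(I-\tfrac{1}{n}\mathbf{1}\mathbf{1}^T)\v{g}(k)$, and contract on $\mathbf{1}^\perp$ via $\|A\v{e}\|_2\leq\sigma_2\|\v{e}\|_2$. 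All the quantitative steps check out: $\|\v{e}(0)\|_2\leq\|\bld(0)\|_1$, the orthogonal projection of $\v{g}(k)$ not increasing its norm, the exchange of summation order with $\alpha(k)\leq\alpha(t)$ for $k>t$, the geometric-series factor $1/(1-\sigma_2)$, and $\sum_{t=0}^K\alpha(t)^2\leq 2+\ln K$. Two cosmetic remarks. First, per \eqref{alg_DLM:isubgradient} the subgradient is $b_i-x_i(k+1)\in\partial q_i(v_i(k+1))$, whereas you take $g_i(k)=x_i(k+1)-b_i$; this sign is immaterial here since only $|g_i(k)|\leq C$ from Proposition \ref{proposition:subgbound} enters the bound. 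Second, your contraction step needs $\mathbf{1}$ to span the \emph{entire} top singular subspace of $A$ (equivalently, the eigenvalue $1$ of $A^TA$ is simple); this follows from connectivity together with $a_{ii}>0$ in Assumption \ref{assump:doublystochastic}, and you correctly flag it as the load-bearing step. So your proof is a valid, self-contained replacement for the external citation.
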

\vspace{0.3cm}
We now ready to show that the distributed Lagrangian method converges to the optimal value at a rate $\mathcal{O}(n\ln(k)/(1-\sigma_2)\sqrt{k})$ on the dual function value estimated at the time-weighted average of any dual estimate. The following Theorem   adopted from \cite{Nedic2015}, is to show this result.
\begin{theorem}[\cite{Nedic2015}]\label{thm:convrate}
Let Assumptions \ref{assump:ConvexAssump}--\ref{assump:doublystochastic} hold. Let $\{x_i(k)\}$ and $\{\lambda_i(k)\}$ for all $i\in\mathcal{V}$ be generated by Algorithm \ref{alg:DLM}. Let $\alpha(k) = 1/\sqrt{k}$ for $k\geq 1$ and $\alpha(0) = 1$. Then we have for all $i=1,\ldots,n$, and  $K\geq 1$
\begin{align}
&q\left(\!\!\!\begin{array}{l}
\frac{\sum_{k=0}^K\alpha(k)\lambda_i(k)}{\sum_{k=0}^K\alpha(k)}
\end{array}\!\! \mathbf{1}\right) - q(\bld^*)\nonumber\\
&\leq\! \frac{\|\bld(0)\!-\!\bld^*\|_2^2}{4\sqrt{K}}+ \frac{4\sqrt{n}C\|\bld(0)\|_{1}+ 5nC^2(2+\ln(K))}{4(1-\sigma_2)\sqrt{K}}\cdot\label{CLcorollary:functionrate}
\end{align}
\end{theorem}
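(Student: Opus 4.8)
The plan is to recognize the updates \eqref{DLM:viUpdate} and \eqref{DLM:LambdaiUpdate} as a distributed subgradient method for the dual program \eqref{alg:mindual}, and to run the standard ``averaged iterate'' analysis, using the disagreement estimate of Lemma \ref{lem:pertbconsensus} to absorb the price of decentralization. First I would introduce the network average $\bar\lambda(k)=\tfrac1n\sum_{i\in\mathcal V}\lambda_i(k)$. Because $A$ is doubly stochastic (Assumption \ref{assump:doublystochastic}), the consensus step preserves the average, $\tfrac1n\mathbf 1^T\mathbf v(k+1)=\bar\lambda(k)$, so summing \eqref{DLM:LambdaiUpdate} gives the centralized-looking recursion $\bar\lambda(k+1)=\bar\lambda(k)-\tfrac{\alpha(k)}{n}\sum_{i\in\mathcal V} g_i(k+1)$, where $g_i(k+1)\in\partial q_i(v_i(k+1))$ and $|g_i(k+1)|\le C$ by Proposition \ref{proposition:subgbound}. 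The crucial feature, and the main obstacle, is that each subgradient is evaluated at the node's own point $v_i(k+1)$ rather than at the common point $\bar\lambda(k)$; this inexactness is exactly what the disagreement bound \eqref{pertblem:averrate} is designed to control.

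Next I would derive the basic per-iteration inequality by expanding $(\bar\lambda(k+1)-\lambda^*)^2$. The square term contributes $\tfrac{\alpha(k)^2}{n^2}\big(\sum_i g_i(k+1)\big)^2\le C^2\alpha(k)^2$. For the cross term I would apply the subgradient inequality $q_i(\lambda^*)\ge q_i(v_i(k+1))+g_i(k+1)(\lambda^*-v_i(k+1))$ and then use the $C$-Lipschitz continuity of $q_i$ (again Proposition \ref{proposition:subgbound}) to replace $q_i(v_i(k+1))$ by $q_i(\bar\lambda(k))$ up to an error $C|v_i(k+1)-\bar\lambda(k)|$, which by \eqref{DLM:viUpdate} and the row-stochasticity of $A$ is at most $\sum_{j}a_{ij}|\lambda_j(k)-\bar\lambda(k)|$. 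Summing over $i$ (and using column-stochasticity) yields
\begin{align}
\frac{2\alpha(k)}{n}\big[q(\bar\lambda(k)\mathbf 1)-q(\bld^*)\big] &\le (\bar\lambda(k)-\lambda^*)^2-(\bar\lambda(k+1)-\lambda^*)^2\nonumber\\
&\quad+\frac{4C\alpha(k)}{n}\sum_{i\in\mathcal V}|\lambda_i(k)-\bar\lambda(k)|+C^2\alpha(k)^2.\nonumber
\end{align}

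I would then sum this over $k=0,\dots,K$. The distance terms telescope to $(\bar\lambda(0)-\lambda^*)^2$, which after the $n/2$ rescaling is bounded by $\tfrac12\|\bld(0)-\bld^*\|_2^2$ via Cauchy--Schwarz, producing the leading term of \eqref{CLcorollary:functionrate}; the step-size choice gives $\sum_{k=0}^K\alpha^2(k)\le 2+\ln K$; and the accumulated disagreement $\sum_{k=0}^K\alpha(k)\sum_i|\lambda_i(k)-\bar\lambda(k)|$ is controlled by Lemma \ref{lem:pertbconsensus}(2), eq.~\eqref{pertblem:averrate}, which supplies the $1/(1-\sigma_2)$ and $\ln K$ dependence. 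To reach the stated left-hand side I would pass to the time-weighted average: since each $q_j$ is convex, Jensen's inequality gives $q\big(\hat\lambda_i(K)\mathbf 1\big)-q(\bld^*)\le\big(\sum_{k=0}^K\alpha(k)\big)^{-1}\sum_{k=0}^K\alpha(k)\big[q(\lambda_i(k)\mathbf 1)-q(\bld^*)\big]$ with $\hat\lambda_i(K)=\tfrac{\sum_k\alpha(k)\lambda_i(k)}{\sum_k\alpha(k)}$, and one more use of $C$-Lipschitzness together with \eqref{pertblem:averrate} converts $q(\lambda_i(k)\mathbf 1)$ back to $q(\bar\lambda(k)\mathbf 1)$. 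Dividing by $\sum_{k=0}^K\alpha(k)\ge 2\sqrt K$ produces the $\mathcal O(n\ln(K)/((1-\sigma_2)\sqrt K))$ rate and, after collecting constants, the bound \eqref{CLcorollary:functionrate}.

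The main difficulty throughout is the two-way coupling between optimization progress and consensus: the subgradients feeding the averaged recursion are taken at disagreeing points, so the argument only closes because the disagreement is itself driven by the (bounded) subgradients and contracts at the geometric rate $\sigma_2$ --- this is precisely the content of Lemma \ref{lem:pertbconsensus}, and the careful bookkeeping of the resulting $\sqrt n$, $n$, $1/(1-\sigma_2)$, and $\ln K$ factors is what yields the stated constants.
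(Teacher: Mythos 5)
Your proposal is correct and uses the same essential ingredients as the paper's proof: the subgradient bound of Proposition~\ref{proposition:subgbound} (hence $C$-Lipschitzness of each $q_i$), a per-iteration subgradient descent inequality, the disagreement estimate \eqref{pertblem:averrate} of Lemma~\ref{lem:pertbconsensus}(2), telescoping with $\sum_k\alpha^2(k)\le 2+\ln K$ and $\sum_k\alpha(k)\ge 2\sqrt K$, and a final Jensen step to put the time-weighted average inside $q$. The one genuine difference is the quantity you telescope: you track the scalar $(\bar\lambda(k)-\lambda^*)^2$, exploiting that double stochasticity makes the consensus step exactly average-preserving, and recover the $\|\bld(0)-\bld^*\|_2^2$ term by Cauchy--Schwarz at the end; the paper instead telescopes the full vector distance $\|\bld(k)-\bld^*\|_2^2$, using non-expansiveness of $A$ (Jensen plus double stochasticity, $\|\mathbf{v}(k+1)-\bld^*\|_2\le\|\bld(k)-\bld^*\|_2$) and decomposing $q(\bld^*)-q(\mathbf{v}(k+1))$ through $q(\lambda_i(k)\mathbf 1)$ and $q(\bar{\bld}(k))$ directly, so that no passage back from $\bar\lambda(k)$ to $\lambda_i(k)$ is needed at the end. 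The two routes are interchangeable and give the same $\Ocal(n\ln(K)/((1-\sigma_2)\sqrt K))$ rate; the only caveat is constant bookkeeping --- your disagreement term $\sum_{i}|\lambda_i(k)-\bar\lambda(k)|$, bounded by applying \eqref{pertblem:averrate} to each $i$ and summing, naively produces an $n^{3/2}$ factor rather than the stated $n$, but the paper's own treatment of $\|\bar{\bld}(k)-\bld(k)\|_1$ in \eqref{URlem:Eq2} is loose in exactly the same way, so neither derivation pins the constants in \eqref{CLcorollary:functionrate} more tightly than the other.
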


\begin{proof}
Let $\lambda^*$ be a dual optimal of \eqref{alg:mindual}. By \eqref{DLM:viUpdate} and \eqref{DLM:LambdaiUpdate} we have for all $\lambda\in\mathbb{R}$,
\begin{align}
&\sum_{i\in\mathcal{V}}(\lambda_i(k+1) - \lambda^*)^2\nonumber\\
&= \sum_{i\in\mathcal{V}} (v_i(k+1) - \alpha(k)\partial q_i(v_i(k+1))-\lambda^*)^2\nonumber\\
&=  \sum_{i\in\mathcal{V}}(v_i(k+1)-\lambda^*)^2 + \sum_{i\in\mathcal{V}}\alpha^2(k)[\partial q_i(v_i(k+1))]^2 \nonumber\\
&\quad-2\alpha(k)\sum_{i\in\mathcal{V}}\partial q_i(v_i(k+1))(v_i(k+1) - \lambda^*)\nonumber\\
&\leq \|\mathbf{v}(k+1)-\bld^*\|_2^2 +nC^2\alpha^2(k)\nonumber\\
&\quad+ 2\alpha((k)q(\bld^*)-q(\mathbf{v}(k+1)),
 \label{URlem:Eq1}
\end{align}
where the last inequality is due to \eqref{prop:idualSB}, the convexity of each $q_i$. 
By \eqref{prop:idualSB} and for some $i\in\mathcal{V}$ the preceding relation implies 
\begin{align}
&\|\bld(k+1) - \bld^*\|_2^2\nonumber\\
&\leq \|\mathbf{v}(k+1)-\bld^*\|_2^2 +nC^2\alpha^2(k)\nonumber\\
&\;+2\alpha(k)(q(\bld^*)-q(\lambda_i(k)\mathbf{1}))\nonumber\\
&\;+ 2\alpha(k)\Big(q(\lambda_i(k)\mathbf{1})-q(\bar{\bld}(k))+q(\bar{\bld}(k))-q(\mathbf{v}(k+1))\Big)\nonumber\\
&\leq  \|\mathbf{v}(k+1)-\bld^*\|_2^2+ nC^2\alpha^2(k)\nonumber\\
&\;+2\alpha(k)(q(\bld^*)-q(\lambda_i(k)\mathbf{1}))\nonumber\\
&\;+ 2C\alpha(k)(\|\lambda_i(k)\mathbf{1}-\bar{\bld}(k)\|_1+\|\bar{\bld}(k)-\mathbf{v}(k+1)\|_1)\nonumber\\
&\leq  \|\bld(k)-\bld^*\|_2^2+ nC^2\alpha^2(k)+2\alpha(k)(q(\bld^*)-q(\lambda_i(k)\mathbf{1}))\nonumber\\
&\;+ 2C\alpha(k)(\sqrt{n}|\lambda_i(k)-\bar{\lambda}(k)|+\|\bar{\bld}(k)-\bld(k)\|_1),\label{URlem:Eq2}
\end{align}
where the last inequality is due to Jensen's inequality and the double stochasticity of $A$, i.e.,
\begin{align}
&\|\mathbf{v}(k+1)-\bar{\bld}(k)\|_1 \leq \|\bld(k)-\bar{\bld}(k)\|_1.
\end{align}
Sum up both sides of \eqref{URlem:Eq2} from $k=0,\ldots,K$ for some $K\geq0$, and use \eqref{pertblem:averrate} we have
\begin{align}
&\|\bld(K+1) - \bld^*\|_2^2\nonumber\\
&\leq  \|\bld(0)-\bld^*\|_2^2+ nC^2\sum_{k=0}^K\alpha^2(k)\nonumber\\
&\quad+2\sum_{k=0}^K\alpha(k)(q(\bld^*)-q(\lambda_i(k)\mathbf{1}))\nonumber\\
&\quad+ 2C\sum_{k=0}^K\alpha(k)(\sqrt{n}|\lambda_i(k)-\bar{\lambda}(k)|+\|\bar{\bld}(k)-\bld(k)\|_1)\nonumber\\
&\leq  \|\bld(0)-\bld^*\|_2^2+ nC^2\sum_{k=0}^K\alpha^2(k)\nonumber\\
&\quad+2\sum_{k=0}^K\alpha(k)(q(\bld^*)-q(\lambda_i(k)\mathbf{1}))\nonumber\\
&\quad+ \frac{4\sqrt{n}C\|\bld(0)\|_{1}}{1-\sigma_2}+ \frac{4nC^2(2	+\ln(K))}{1-\sigma_2}.\nonumber
\end{align}
Dividing both sides of the previous equation by $2\sum_{k=0}^K\alpha(k)$ and rearranging the terms we obtain 
\begin{align}
&\frac{\sum_{k=0}^K\alpha(k)q(\lambda_i(k)\mathbf{1})}{\sum_{k=0}^K\alpha(k)}-q(\bld^*)\nonumber\\
&\leq  \frac{\|\bld(0)-\bld^*\|_2^2+ nC^2\sum_{k=0}^K\alpha^2(k)}{2\sum_{k=0}^K\alpha(k)}\nonumber\\
&\quad+ \frac{2\sqrt{n}C\|\bld(0)\|_{1}+2nC^2(2+\ln(K))}{(1-\sigma_2)\sum_{k=0}^K\alpha(k)}.\label{URlem:Eq3}
\end{align}
Since $\alpha(k) = 1/\sqrt{k}$ with $\alpha(0) = 1$, we have
\begin{align}
&\sum_{k=0}^K\alpha^2(k) = 2 + \sum_{k=2}^K\frac{1}{k} \leq 2+ \int_{1}^K\frac{dk}{\ln(k)} = 2+\ln(K).\nonumber\\
&\sum_{k=0}^K\alpha(k) = \sum_{k=0}^K\frac{1}{\sqrt{k}} \geq\int_{0}^K\frac{dk}{\sqrt{k}} = 2\sqrt{K}\nonumber
\end{align}
Substituting these two preceding relations into \eqref{URlem:Eq3} and using Jensen's inequality for the left hand side we obtain
\begin{align}
&q\left(\!\!\!\begin{array}{l}
\frac{\sum_{k=0}^K\alpha(k)\lambda_i(k)}{\sum_{k=0}^K\alpha(k)}
\end{array}\!\!\mathbf{1}\!\right)-q(\bld^*)\nonumber\\
&\leq  \frac{\|\bld(0)-\bld^*\|_2^2+ nC^2(2+\ln(K))}{4\sqrt{K}}\nonumber\\
&\quad+ \frac{2\sqrt{n}C\|\bld(0)\|_{1}+2nC^2(2+\ln(K))}{2(1-\sigma_2)\sqrt{K}}\nonumber\\
&\leq  \frac{\|\bld(0)-\bld^*\|_2^2}{4\sqrt{K}}+ \frac{\sqrt{n}C\|\bld(0)\|_{1}}{(1-\sigma_2)\sqrt{K}}\nonumber\\
&\quad+ \frac{5nC^2(2+\ln(K))}{4(1-\sigma_2)\sqrt{K}}\nonumber,
\end{align}
which concludes our proof. 
\end{proof}

\section{Power Application Studies}\label{sec:Simulation}
Our main motivations for the studies in this paper are for use in power systems applications \cite{StevenLow2014,Dorfler2015}. In this section, we show the effectiveness of the Lagrangian distributed method by applying it to the economic dispatch problem, wherein geographically distributed generators of electricity must coordinate to meet fixed demand. We first apply the method to the IEEE-14 bus test system \cite{IEEE14BusWebsite} and compare the performance of our method with that in \cite{Kar2012}. Then to illustrate its performance on a larger scale system, we study economic dispatch on the IEEE-118 bus test system \cite{IEEE118BusWebsite}. Simulation results are given to demonstrate that our method works very well in both systems.

\subsection{Case study 1: Economic dispatch on IEEE-14 bus systems}

\begin{figure}
\centering
\includegraphics[width=3.5in,keepaspectratio]{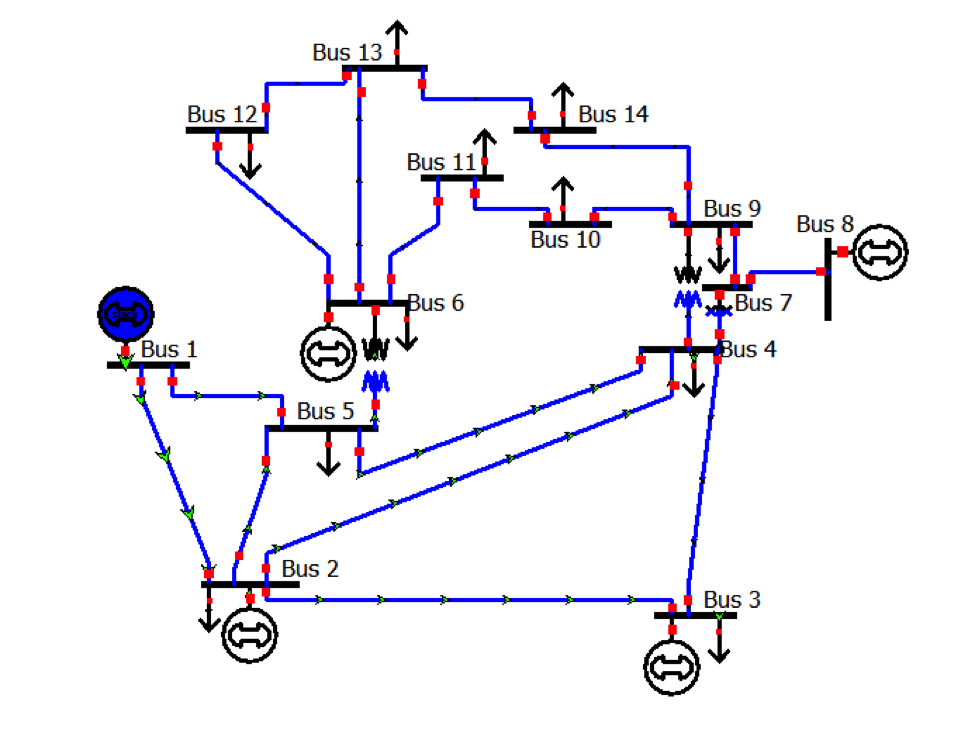}
\caption{IEEE $14$ bus systems.}\label{fig:IEEE14Bus}
\end{figure}

\begin{table}
\centering
\caption{Generator parameters (MU= Monetary units)}
\vspace{-0.2cm}
\begin{tabular}{|c|c|c|c|c|}
\hline
Gen. & Bus & $\gamma_i[MU/MW^2]$ & $\beta_i[MU/MW]$ & $P_i^{\text{max}}[MW]$\\
\hline
 1 & 1 & 0.04 & 2.0 & 80 \\
 2 & 2 & 0.03 & 3.0 & 90 \\
 3 & 3 & 0.035 & 4.0 & 70 \\
 4 & 6 & 0.03 & 4.0 & 70 \\
 5 & 8 & 0.04 & 2.5 & 80 \\
\hline
\end{tabular}
\label{tab:14BusGeneratorPar}
\end{table}

\begin{figure}
\centering
\includegraphics[width=3.5in,keepaspectratio]{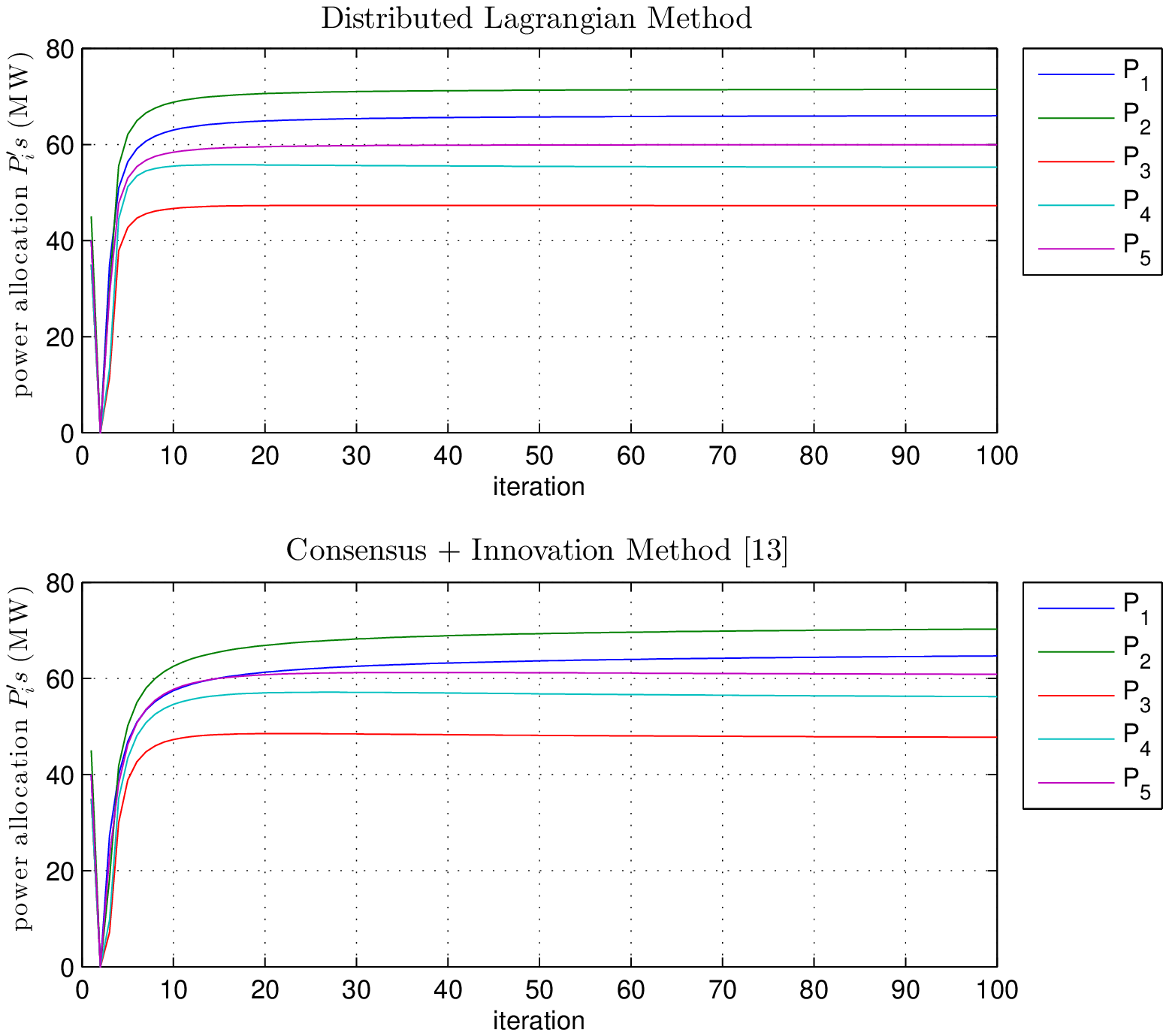}
\includegraphics[width=3.5in,keepaspectratio]{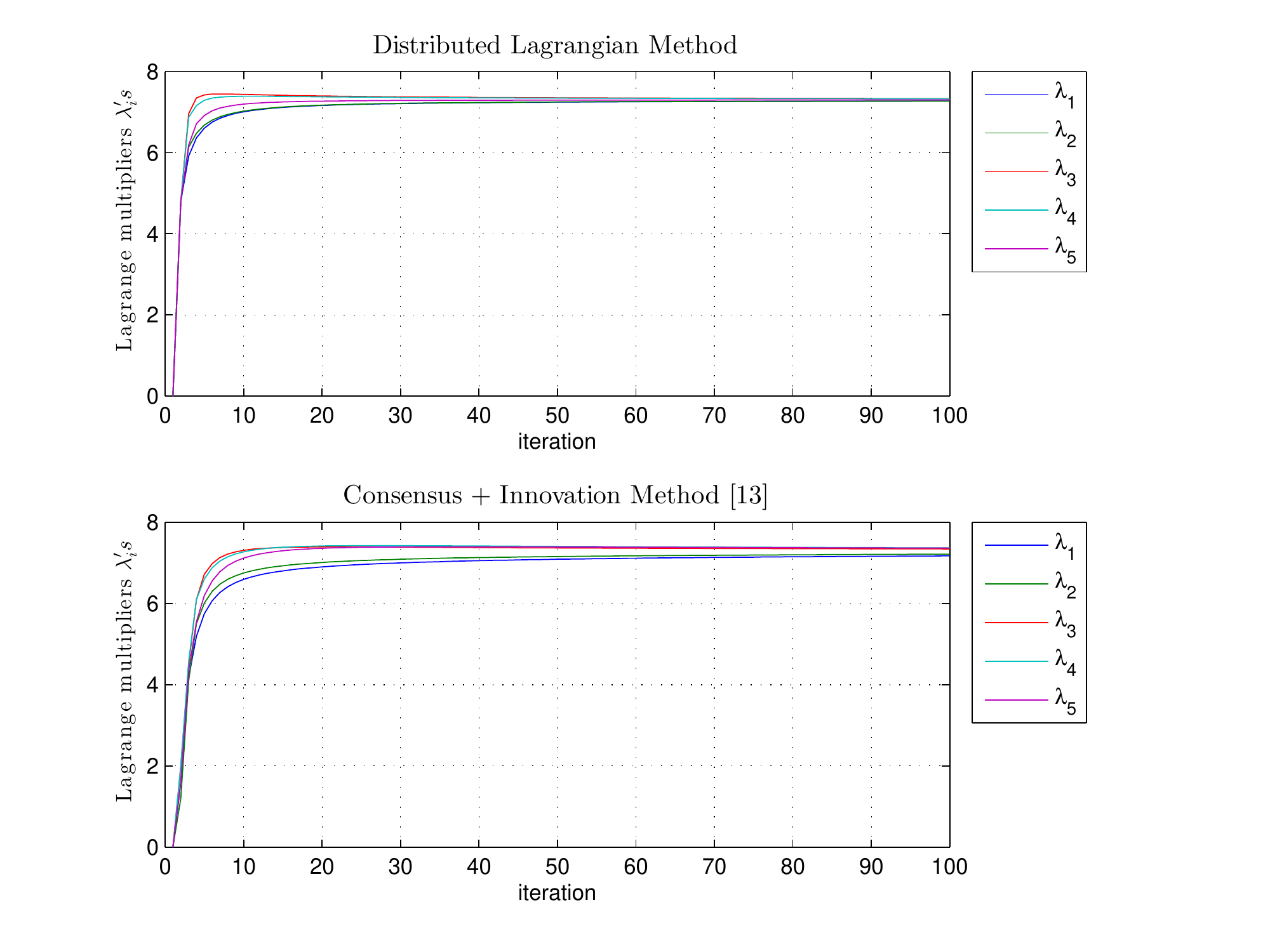}
\vspace{-0.8cm}
\caption{Economic dispatch problem on IEEE-14 bus systems.}\label{fig:IEEE14Bus_Simulation}\vspace{-0.5cm}
\end{figure}

We first study the economic dispatch problem on IEEE-14 bus systems, where generators are located at buses $1,2,3,6,$ and $8$ as shown in Fig. \ref{fig:IEEE14Bus}. Each generator $i$ suffers a quadratic cost as a function of the amount of its generated power $P_i$, i.e., $f_i(P_i) = \gamma_iP_i^2+\beta_iP_i$ where $\gamma_i,\beta_i$ are cost coefficients of generator $i$. We also assume that each generator $i$ can only generate a limited amount of power represented by $[0,P_i^{\text{max}}]$. The coefficients of each generator are listed in Table \ref{tab:14BusGeneratorPar} which are adopted from \cite{Kar2012}. The total load demand required from the system is $P = 300 MW$, which is a typical load value chosen for the purpose of this simulation. The goal now is to meet the load demand while minimizing the total cost incurred at the generators. Here we assume the connection between generators is represented by an undirected circle graph. 

Since the local function at each generator is quadratic, the updates in the DLM algorithm can be simplified as 
\begin{align}
&x_i(k+1) =  \mathcal{P}_{[0,P_i^{\max}]}\left[\begin{array}{c}
\frac{-\sum_{j\in \mathcal{N}_i}a_{ij}\lambda_j(k)}{2\gamma_i}-\frac{\beta_i}{2\gamma_i}
\end{array}
\right]\nonumber \\
&\lambda_i(k+1) = \sum_{j\in \mathcal{N}_i}a_{ij}\lambda_j(k) - \alpha(k)(x_i(k+1)-b_i),\nonumber
\end{align}
where $\mathcal{P}_{[0,P_i^{\max}]}[x]$ is the projection of $x$ on to the interval $[0,P_i^{\max}]$. 

In this study, we compare the performance of our DLM algorithm with the consensus + innovation method (CIM) studied in \cite{Kar2012}. We select the same step size as that used in \cite{Kar2012}, i.e., $\alpha(k) = 0.08/k^{0.85}$, which also satisfies the conditions in Lemma \ref{CLlem:DualConvergence}. The simulations of these two methods when solving the economic dispatch problem on the IEEE-14 bus test system are shown in Figure  \ref{fig:IEEE14Bus_Simulation}. In each figure, our method is shown on the top plot while the CIM algorithm of \cite{Kar2012} is shown in the bottom. The plots show that in this study DLM outperforms CIM. In particular, DLM schedules the power to the generators after $20$ iterations while CMI requires $40$ iterations. Moreover, the Lagrange multipliers $\lambda_i$ in DLM converge after $60$ iterations, while those in CIM have not converged even after $100$ iterations.

\begin{figure}
\centering
\includegraphics[width=3.5in,height=2.5in,keepaspectratio]{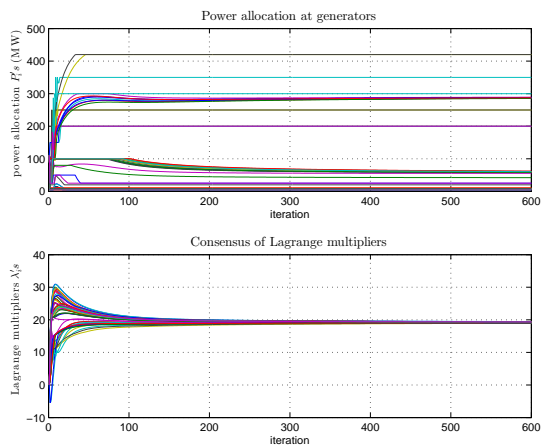}
\vspace{-0.3cm}
\caption{Economic dispatch problem on IEEE-118 bus systems.}\label{fig:IEEE118_ED}
\end{figure}

\subsection{Case study 2: Economic dispatch on IEEE-118 bus systems}
We now consider the economic dispatch problem on a larger system, the IEEE-118 bus test system \cite{IEEE118BusWebsite}. This system has $54$ generators connected by bus lines. Each generator $i$ suffers a quadratic cost as a function of the amount of its generated power $P_i$, i.e., $f_i(P_i) = \mu_i + \beta_iP_i + \gamma_i P_i^2$. The coefficients of functions $f_i$ belong to the ranges $\mu_i\in[6.78,74.33]$, $\beta_i\in[8.3391,37.6968]$, and $\gamma_i\in[0.0024,0.0697]$. The units of $\mu_i,\beta_i,\gamma_i$ are $MBtu, MBtu/MW$ and $MBtu/MW^2$, respectively. Each $P_i$ is constrained on some interval $[P_i^{\text{min}},P_i^{\max}]$ where these values vary as $P_i^{\text{min}}\in[5,150]$ and $P_i^{\text{max}}\in[150,400]$. The unit of power in this system is $MW$. The total load demand required from the system is $P = 6000 (MW)$, which again is a typical load demand chosen for the purpose of this simulation. In this study we select the step size $\alpha(k) = 1/k$ for $k\geq 1$ with $\alpha(0) = 1$. Moreover, the connections between generators are based on the physical connections given by bus data \cite{IEEE118BusWebsite}. The simulation results are given in Figure \ref{fig:IEEE118_ED} which demonstrate the convergence of our method for this system. 

\vspace{-0.2cm}
\section{Concluding Remarks}\label{sec:Conclusion}
In this paper, we have proposed a distributed Lagrangian method for network resource allocation problems. Under a minimal assumption on the convexity of the nodes' functions, we show that our method achieves asymptotic convergence to the optimal value with the rate, ${\mathcal{O}(n\ln(k)/(1-\sigma_2)\sqrt{k})}$. Finally, to show the effectiveness of our method, we applied our method to solve the economic dispatch problem on the benchmark IEEE-14 and IEEE-118 bus test systems. 

Our main motivation comes from applications in power systems, where our method provides a useful tool for determining near optimal power allocation in a distributed power system. One direction following this work is to couple our method with other control tasks in power systems, for example, frequency control problems \cite{StevenLow2014,Dorfler2016}. A second interesting open direction is to consider the case of unknown or time-varying resources as is typically found in actual power systems where the demand is changing over time and the relevant data may be uncertain.    

\section*{Acknowledgement}
The authors would like to thank Alex Olshevsky, Rafal Goebel, and Daniel Liberzon for their insightful comments which helped to improve the quality of this paper.

\bibliographystyle{plain}
\bibliography{refs}

\end{document}